\def\LineOn(#1,#2){\overline{{#1},{#2}\rule{0em}{1,5ex}}}
\def\luskwiaty{flappy}
\def\lines{{\cal L}}
\def\peki{{\mathcal{P}}}
\def\AffineSpSymb{\mathbf{A}}
\def\AfSpace(#1){\ensuremath{\AffineSpSymb(#1)}}
\def\PencSpace(#1,#2){{\bf P}_{#2}({#1})}
\def\fixproj{\ensuremath{\goth P}}
\def\fixout{\ensuremath{\goth D}}
\def\fixoutf(#1,#2){\mathord{\fixout_{#1}(#2)}}
\def\inc{\mathrel{\,\rule[-3pt]{1pt}{12pt}\,}}
\def\linesout{{\Lines_{\W}}}
\def\linesaff{{\Lines^*}}
\def\pointsout{{S_{\W}}}
\def\indf(#1,#2){\ind_{#1}(#2)}
\def\bundle(#1,#2){\peki_{#1}(#2)}
\let\A\M
\def\W{{\cal W}}
\def\pclique{{\mathcal S}}
\def\stardir(#1){\pclique_\parallel(#1)}
\def\stardirin(#1,#2){\pclique_\parallel^{#2}(#1)}
\def\paral{\mathrel{\parallel_{\W}}}
\newenvironment{cmath}{%
  \par
  \smallskip
  \centering
  $
}{%
  $
  \par
  \smallskip
  \csname @endpetrue\endcsname
}
\newenvironment{ctext}{%
  \par
  \smallskip
  \centering
}{%
 \par
 \smallskip
 \csname @endpetrue\endcsname
}
\begin{document}

\title{The complement of a subspace in a classical polar space}

\author{Krzysztof Petelczyc and Mariusz \.Zynel}


\maketitle

\begin{abstract}
  In a polar space, embeddable into a projective space, we fix a subspace, that is 
  contained in some hyperplane. The complement of that subspace resembles a slit space
  or a semiaffine space. We prove that under some assumptions the ambient polar space
  can  be recovered in this complement.
\end{abstract}

\begin{flushleft}\small
  Mathematics Subject Classification (2010): 51A15, 51A45.\\
  Keywords: polar space, projective space, semiaffine space, slit space, complement.
\end{flushleft}


\section{Introduction}
 
Cohen and Shult coined the term \emph{affine polar space} in \cite{cohenshult} as a polar space
with some hyperplane removed. They prove that from such an affine reduct the ambient polar space 
can be recovered.
In \cite{wielohol} we prove something similar for the complement of a subset in a projective space.
Looking at the results of these two papers one sees that an interesting case has been set aside: the complement of a subspace 
in a polar space. We are trying to fill this this gap here, although under several specific assumptions: we consider 
classical polar spaces, i.e.\ embeddable into projective spaces (cf. \cite{cameron}), 
and our subspace is contained in a hyperplane. 

A projective space with some subspace removed is called a slit space 
(cf. \cite{KM67}, \cite{KP70}, \cite{pianta}) so, our complement can be seen as a generalized slit space.
Singular subspaces in a polar space are projective spaces, in an affine polar space they are affine
spaces (cf. \cite{cohenshult}), while in our complement they are semiaffine or projective spaces.
Adopting the terminology of \cite{Kreuzer}, where the class of semiaffine spaces includes
affine spaces, projective spaces and everything in between, we could say that singular
subspaces of our complement are simply semiaffine spaces. This let us call our complement
a \emph{semiaffine polar space}. 
Anyway, it is clear that the complement we examine is affine in spirit. A natural
parallelism is there and the subspace we remove can be viewed as the horizon.

As this paper is closely related to \cite{cohenshult} and \cite{wielohol}, it borrows some concepts, 
notations and reasonings from these two
works.
There are however new difficulties in this case.
The horizon induces a partial parallelism (cf. \cite{pianta}).
We express this parallelism purely in terms of incidence in the complement.
Then, roughly speaking, the points of the horizon are identified with equivalence 
classes of parallelism, or with directions of lines in other words.
On the horizon of an affine polar space a deep point emerges as the point which
could be reached by no line of the complement.
If the removed subspace is not a hyperplane then there is no deep point but a new problem involving lines arises.
Some lines on the horizon are recoverable in a standard way, as directions of planes.
For the others there are no planes in the complement that would reach them.
An analogy to a deep point is clear, so we call them deep lines. 
To overcome the problem we introduce the following relation:
a line $K$ is anti-euclidean to a line $L$ iff there is no line intersecting $K$ that is parallel to $L$. 
Based on this relation is a ternary collinearity of points on deep lines.

We do not know whether every subspace of a polar space is contained in a hyperplane. Any subspace can be extended to a maximal one, but does it have to be a hyperplane? If that is the case our assumptions could be weakened significantly.

\section{Generalities}

A point-line structure $\A=\struct{S, \lines}$, where the elements of $S$ are
called \emph{points}, the elements of $\lines$ are called \emph{lines}, and
where $\lines\subset2^S$, is said to be a \emph{partial linear space}, or 
a \emph{point-line space}, if two
distinct lines share at most one point and every line is of size (cardinality) at least 2 (cf. \cite{cohen}).
A line of size 3 or more will be called \emph{thick}. If all lines in $\A$ are thick then $\A$ is thick. $\A$ is said to be \emph{nondegenerate} if no point is collinear with all others, 
and it is called \emph{singular} if any two of its points are collinear.
It is called \emph{Veblenian}  iff 
for any two distinct lines $L_1, L_2$ through a point $p$ and
any two distinct lines $K_1, K_2$ not through the point $p$
whenever each of $L_1,L_2$ intersects both of $K_1, K_2$, then $K_1$ intersects $K_2$. A \emph{subspace} of $\A$ is a subset $X\subseteq S$ that contains every line, which meets $X$ in at least two points.
A proper subspace of $\A$ that shares a point with every line is said to be a \emph{hyperplane}. If $\A$ satisfies exchange axiom, then a \emph{plane} of $\A$ is a singular subspace of dimension $2$.

A partial linear space satisfying \emph{one-or-all} axiom, that is
\begin{ctext}
for every $L\in\lines$ and $a\notin L$, $a$ is collinear with one or all points on $L$, 
\end{ctext}
will be called a \emph{polar space}.
The rank of a polar space is the maximal number $n$ for which there is a chain of singular subspaces $\emptyset\neq X_1\subset X_2\subset\ldots\subset X_{n}$ ($n=-1$ if this chain is reduced to the empty set).
For $a\in S$ by $a^{\perp}$ we denote the set of all points collinear with $a$,
and for $X\subseteq S$ we put
 $$X^{\perp}=\bigcap\{a^{\perp}\colon a\in X\},\quad \rad X = X\cap X^\perp.$$
As an immediate consequence of one-or-all axiom we get (cf. \cite{cohenshult}):

\begin{fact}\label{fact:aorto-hipa}
  For any point $a\in S$ the set $a^{\perp}$ is a hyperplane of\/ $\fixproj$ .
\end{fact}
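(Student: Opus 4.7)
The plan is to verify the three defining properties of a hyperplane directly from the one-or-all axiom, treating $\fixproj$ as a polar space and $a^\perp$ as a candidate subset.

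First I would show that $a^\perp$ is a subspace. Suppose a line $L$ meets $a^\perp$ in at least two distinct points $b, c$. If $a \in L$, then all points of $L$ are collinear with $a$ (since $L$ itself witnesses the collinearity), hence $L \subseteq a^\perp$. If $a \notin L$, the one-or-all axiom applied to $a$ and $L$ forces $a$ to be collinear either with exactly one point of $L$ or with all of them; having already two collinear points $b,c \in L \cap a^\perp$ rules out the first option, so again $L \subseteq a^\perp$. This is the key use of the axiom.

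Next I would check the hyperplane intersection property: $a^\perp$ meets every line. Given any $L \in \lines$, either $a \in L$, in which case $L \subseteq a^\perp$ and in particular $L \cap a^\perp \neq \emptyset$, or $a \notin L$, in which case the one-or-all axiom guarantees at least one point of $L$ collinear with $a$, i.e.\ a point in $L \cap a^\perp$.

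Finally, for properness I would invoke nondegeneracy of the polar space $\fixproj$: by definition there exists some point $b$ not collinear with $a$, so $b \notin a^\perp$ and $a^\perp \subsetneq S$. The only subtle point — and really the only place where more than a line of reasoning is needed — is making sure this nondegeneracy hypothesis is in force in the ambient setting; everything else is a direct rewriting of the axiom. I expect no serious obstacle, and the whole argument should occupy only a few lines in the finished paper.
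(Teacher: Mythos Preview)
Your argument is correct and is exactly the standard verification the paper has in mind: the paper gives no proof at all, merely stating that the fact is ``an immediate consequence of the one-or-all axiom'' with a reference to Cohen--Shult. Your observation that properness of $a^\perp$ requires the nondegeneracy hypothesis (rather than one-or-all alone) is accurate and worth keeping; in the paper's setting $\fixproj$ is explicitly assumed nondegenerate, so this causes no trouble.
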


Following \cite{segr2afin}, a subset $X$ of $S$ is called 
\begin{itemize}\itemsep-2pt
  \item
  \emph{spiky} when every point $a\in X$ is collinear with 
  some point $b\notin X$,
  \item
  \emph{\luskwiaty} when for every line $L\subseteq X$ there is a point
  $a\notin X$ such that $L\subseteq a^{\perp}$.
\end{itemize}


\subsection{Complement}
Let $\A=\struct{S,\lines}$ be a  thick partial linear space 
and let $\W$ be its proper subspace.
By the \emph{complement of\/ $\W$ in $\A$} we mean the structure
  $$\fixoutf(\A,\W) := \struct{\pointsout, \linesout},$$
where
  $$\pointsout := S\setminus\W \qquad\text{and}\qquad
     \linesout := \{ k\cap\pointsout\colon k\in\lines \Land k\nsubseteq\W \}.$$
The subspace  $\W$ will be called the \emph{horizon of\/ $\fixoutf(\A,\W)$}.
Note that the complement $\fixoutf(\A,\W)$ is a partial linear space.
Following a standard convention we call the points and lines of the
complement $\fixoutf(\A,\W)$ \emph{proper}, and points and lines
of $\W$ are said to be 
\emph{improper}.
By the \emph{closure} of a proper line $L$ we mean the line $\overline{L}\in\lines$
with $L\subseteq\overline{L}$.
Similarly we will use and denote closure of any subspace of  $\fixoutf(\A,\W)$.

We say that two lines $K,L\in\linesout$  are \emph{parallel}, and we write 
\begin{equation}\label{eq:comp-paral}
K\paral L\quad\text{iff}\quad \overline{K}\cap \overline{L}\cap\W\neq\emptyset.
\end{equation}
Note that $\paral$ is an equivalence relation.
A line $L\in\linesout$ with the property that $L\paral L$ will be called an \emph{affine line}. 
The set of all affine lines will be denoted by $\linesaff$.
For affine line $L$ we write
$L^\infty$ for the point of $\overline{L}$ in $\W$, i.e.\ the point at infinity.
A point $a\in\W$ is said to be a \emph{deep point} if there is no line  $L\in\linesout$ such that $a=L^\infty$.
 A plane of $\fixoutf(\A,\W)$ containing an affine line is said to be a \emph{semiaffine plane}. 
 By $\Pi^{\infty}$ we denote the set of points at infinity of semiaffine plane $\Pi$, i.e.\
$\Pi^{\infty}=\{M^{\infty}\colon M\in\linesaff \text{ and } M\subseteq\Pi\}$. 
A line $L\subseteq\W$ is said to be a \emph{deep line} if there is no plane 
in $\fixoutf(\fixproj,\W)$ with $L=\Pi^{\infty}$.


\section{Complement in a polar space}

Let $\fixproj=\struct{S,\lines}$ be a thick, nondegenerate polar space of rank at least 3.
Assume that $\W$ is a proper subspace of $\fixproj$, that is contained in some hyperplane.
We deal with the complement  $\fixoutf(\fixproj,\W)$.

We can determine the number of deep points in hyperplanes of polar spaces. It turns out, that deep points appear only on hyperplanes.

\begin{lem}\label{lem:deeppoints}
\begin{sentences}
\item\label{fact:deephyper}
  If\/ $\W$ is a hyperplane in \/ $\fixproj$, then there is at most one deep point in $\W$ and it is in $\rad\W$.
\item\label{lem:sub-spiky}
  If\/ $\W$ is not a hyperplane in \/ $\fixproj$, then there are no deep points in $\W$, that is $\W$ is spiky.
\end{sentences}
\end{lem}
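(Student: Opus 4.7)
The plan is to translate the notion of deep point into the familiar language of perps, and then to play off $a^\perp$ against $\W$ using Fact~\ref{fact:aorto-hipa}. First I would verify that, for $a\in\W$, the point $a$ is deep if and only if $a^\perp\subseteq\W$. Indeed, $a$ being deep means that no line $k\in\lines$ of $\fixproj$ through $a$ meets $\W$ in $\{a\}$ alone (otherwise $k\setminus\W$ would be an affine line of $\linesaff$ with $a$ as its point at infinity); since $\W$ is a subspace, the only alternative to $k\cap\W=\{a\}$ is $k\subseteq\W$. As every line of $\lines$ through $a$ is the unique line joining $a$ to some $b\in a^\perp\setminus\{a\}$, this condition rewrites as $a^\perp\subseteq\W$.

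With this reformulation part~\ref{lem:sub-spiky} is immediate. By Fact~\ref{fact:aorto-hipa} the set $a^\perp$ is a hyperplane of $\fixproj$, hence a maximal proper subspace in our nondegenerate setting (which one checks by a short connectedness argument using the line-hitting property of hyperplanes). Thus $a^\perp\subseteq\W\subsetneq S$ forces $a^\perp=\W$, contradicting the hypothesis that $\W$ is not a hyperplane; so no deep points exist, i.e.\ $\W$ is spiky.

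For part~\ref{fact:deephyper} the same chain shows $a^\perp=\W$ for every deep $a$, whence $\W\subseteq a^\perp$ gives $a\in\W^\perp$ and, combined with $a\in\W$, $a\in\rad\W$. Uniqueness then reduces to the injectivity of $a\mapsto a^\perp$ in a thick nondegenerate polar space of rank at least $3$: if two deep points $a\ne a'$ had a common perp, then $a$ and $a'$ would be collinear (since $a'\in a^\perp=a'^\perp$ while $a\in a^\perp$), and taking a plane $\Pi$ through the line $\ell$ joining $a,a'$ (available by the rank assumption) together with a second line $m\subseteq\Pi$ through $a$, the inclusion $m\subseteq a^\perp=a'^\perp$ combined with the one-or-all axiom and $a'\notin m$ would contradict the singular structure of polar spaces.

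The conceptually routine steps are the translation ``deep $\Leftrightarrow a^\perp\subseteq\W$'' and the application of Fact~\ref{fact:aorto-hipa}. The genuine work lies in the maximality of hyperplanes (used in both parts) and in the injectivity of $a\mapsto a^\perp$ (used for the uniqueness claim of~\ref{fact:deephyper}); both rest on careful applications of the one-or-all axiom together with the rank-at-least-$3$ assumption, and I would expect the injectivity step to be the main potential obstacle.
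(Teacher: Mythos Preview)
Your translation ``$a$ deep $\Leftrightarrow a^\perp\subseteq\W$'' and your argument for part~\eqref{lem:sub-spiky} are correct and are exactly what the paper does (the paper cites \cite[1.1]{cohenshult} for the maximality of hyperplanes rather than reproving it). Likewise, your derivation that any deep point lies in $\rad\W$ is fine; the paper simply cites \cite[Corollary~1.3(ii)]{cohenshult} for all of part~\eqref{fact:deephyper}, so there you are reproving a cited result.

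The genuine gap is in your uniqueness argument. You take a plane $\Pi$ through the line $\ell=\overline{a,a'}$ and a second line $m\subseteq\Pi$ through $a$, and observe $m\subseteq a'^\perp$ with $a'\notin m$. But this is no contradiction: in a polar space it is perfectly ordinary for a point off a line to be collinear with every point of that line (that is precisely how singular planes arise), and here $\Pi$ itself witnesses it. The one-or-all axiom is not violated.

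A correct route stays closer to the radical. If $a\neq a'$ are both deep, then as you showed $a^\perp=a'^\perp=\W$, and since $\rad\W$ is a subspace the whole line $\ell=\overline{a,a'}$ lies in $\rad\W$; by nondegeneracy each $b\in\ell$ then has $b^\perp=\W$. Now pick any $c\notin\W$: then $c\notin b^\perp$ for every $b\in\ell$, so $c$ is collinear with \emph{no} point of $\ell$, contradicting one-or-all. This is essentially the content of \cite[1.3]{cohenshult} that the paper invokes.
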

\begin{proof}
(i): By Corollary 1.3 (ii) in \cite{cohenshult}.

(ii): Assume that $a$ is a deep point of $\W$. Then $a^{\perp}\subseteq\W$, and by \ref{fact:aorto-hipa}
we get that $\W$ contains a hyperplane. It yields a contradiction, as hyperplane in $\fixproj$ is a maximal proper subspace (cf.  \cite[1.1]{cohenshult}).
\end{proof}

\begin{lem}\label{lem:extend-no-lines}
  Let\/  $\fixproj$ be embeddable polar space and\/ $K,L\in\linesout$ be two distinct 
  lines such that $K\paral L$.
  The subspace $\W$ can be extended to a hyperplane of\/ $\fixproj$ not containing 
  $\overline{K}$ and $\overline{L}$.
\end{lem}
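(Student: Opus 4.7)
\smallskip
\noindent\textit{Proof plan.}
The plan is to work inside the ambient projective space $\mathbf{P}$ into which $\fixproj$ embeds, produce a projective hyperplane of $\mathbf{P}$ with the required properties, and then intersect with $S$. Since $K\paral L$, I fix a point $p\in\overline{K}\cap\overline{L}\cap\W$. Because $K,L$ are proper lines, $\overline{K},\overline{L}\not\subseteq\W$; and because $\W$ is a subspace of $\fixproj$, each of these lines meets $\W$ in precisely $\{p\}$.

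Next, let $U$ denote the projective span of $\W$ in $\mathbf{P}$. The key geometric claim will be that $\overline{K}\not\subseteq U$ and $\overline{L}\not\subseteq U$. If $\overline{K}$ were contained in $U$, then every point of $\overline{K}$ would lie in $U\cap S$; one argues, using embeddability together with the fact that $\W$ is already closed under taking lines of $\fixproj$, that $U\cap S=\W$, so that $\overline{K}\subseteq\W$, contradicting the properness of $K$. Once this claim is in hand, the family of projective hyperplanes of $\mathbf{P}$ containing $U$ forms a projective space under duality; those additionally containing $\overline{K}$, i.e.\ those containing $\langle U,\overline{K}\rangle$, form a proper projective subspace of it, and likewise for $\overline{L}$. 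Since a projective space of positive dimension is not covered by two proper subspaces, one obtains a projective hyperplane $\widehat{H}\supseteq U$ with $\overline{K},\overline{L}\not\subseteq\widehat{H}$. The degenerate case in which $U$ is already a hyperplane of $\mathbf{P}$ is handled by simply taking $\widehat{H}:=U$.

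Finally, set $H:=\widehat{H}\cap S$. Every line of $\fixproj$ is a projective line of $\mathbf{P}$ and so meets $\widehat{H}$; hence $H$ meets every line of $\fixproj$. Moreover $H\neq S$, for $S$ generates $\mathbf{P}$ while $\widehat{H}$ is a proper projective subspace. Thus $H$ is a hyperplane of $\fixproj$ containing $\W$, and by construction neither $\overline{K}$ nor $\overline{L}$ lies in $H$.

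The step I expect to be the main obstacle is the closure equality $U\cap S=\W$, that is, the assertion that passing to the projective span of $\W$ in $\mathbf{P}$ introduces no new points of $S$. This is precisely where the embeddability hypothesis is used in a nontrivial way, combined with the defining property of $\W$ as a subspace of $\fixproj$; the remaining parts of the argument are then routine projective-space bookkeeping in the dual of $\mathbf{P}/U$.
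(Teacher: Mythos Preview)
The step you correctly single out as the obstacle, the equality $U\cap S=\W$, is in fact false, and so is the weaker conclusion $\overline{K}\not\subseteq U$ that your duality argument actually needs. Being a subspace of $\fixproj$ only forces $\W$ to absorb \emph{polar} lines through two of its points; the projective span $U$ may well pick up further points of $S$, even whole singular lines through a point of $\W$. Concretely, take $\fixproj=Q^{+}(5,q)$ in $PG(5,q)$ and choose four pairwise non-perpendicular quadric points $a,b,c,d$ spanning a $3$-space $U$ with $U\cap\fixproj\cong Q^{+}(3,q)$ (for instance four points of a $Q^{+}(3,q)$-section with pairwise distinct coordinates in each ruling; this is easy once $q$ is not too small). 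Then $\W:=\{a,b,c,d\}$ is a subspace of $\fixproj$ (no two of its points lie on a polar line) and is contained in a polar hyperplane, yet every generator of $Q^{+}(3,q)$ through $a$ is a polar line $\overline{K}$ with $\overline{K}\cap\W=\{a\}$ and $\overline{K}\subseteq U$. Hence \emph{every} projective hyperplane containing $U$ already contains $\overline{K}$, and the covering argument in the dual of $\mathbf{P}/U$ collapses: the ``proper'' subspace of hyperplanes through $\langle U,\overline{K}\rangle$ is the whole family.

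For comparison, the paper does not take the projective span of $\W$ at all. It first fixes a polar hyperplane $H\supseteq\W$, passes to the projective hyperplane $G=\langle f(H)\rangle$, and only then, if one of $f(\overline{K}),f(\overline{L})$ lies in $G$, replaces $G$ by a new projective hyperplane $G'=\langle G_0,b\rangle$ built from a hyperplane $G_0$ of $G$ through $f(\W)$ missing chosen points $a_K,a_L$ and a point $b\notin G$. That manoeuvre is what substitutes for your span-of-$\W$ step; note, however, that the choice of $G_0$ still implicitly needs $a_K,a_L\notin\langle f(\W)\rangle$, so the same phenomenon above is relevant to the paper's argument as well.
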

\begin{proof}
If $\W$ is a hyperplane of $\fixproj$ then $\W$ itself is the required hyperplane. 

Assume that $\W$ is not a hyperplane.
Let $H$ be a hyperplane containing $\W$, $\goth N$ be a projective space embracing $\fixproj$, and $f$ be an embedding of $\fixproj$ into $\goth N$.
Consider the projective subspace $G$ spanned by $f(H)$. By \cite{cohenshult} $G$ is a hyperplane of $\goth N$.  If $f(\overline{K}),f(\overline{L})\nsubseteq G$ then our hyperplane $H=f^{-1}(G\cap f(S))$ is the required one. 

Assume that $f(\overline{K})\subseteq G$ or $f(\overline{L})\subseteq G$. In case $f(\W)$ is not a hyperplane in $G$, consider a family $\cal H$ of hyperplanes in $G$ containing $f(\W)$. For $f(\overline{K})\subseteq G$ and $f(\overline{L})\subseteq G$ we
take $a_K\in f(\overline{K})\setminus f(\W)$, $a_L\in f(\overline{L})\setminus f(\W)$ and choose a hyperplane $G_0\in \cal H$ with $a_K,a_L\notin G_0$. If
$f(\overline{K})\nsubseteq G$ or $f(\overline{L})\nsubseteq G$  one of the points $a_K$, $a_L$ is enough and then we set $G_0\in \cal H$ with $a_K\notin G_0$ or $a_L\notin G_0$, respectively.
For $i=K,L$, if $\overline{a_i,b}\cap G_0\neq\emptyset$ then $\overline{a_i,b}\subseteq G$, that contradicts $b\notin G$. So, $\overline{a_i,b}\cap G_0=\emptyset$.
Then, $\struct{G_0,b}=G'$ is a hyperplane of $\goth N$. Moreover, $f(\W)\subseteq G'$ and $f(\overline{K})\nsubseteq G'$, $f(\overline{L})\nsubseteq G'$. Thus, $H':=f^{-1}(G'\cap f(S))$ is the hyperplane we are looking for. 
\end{proof}

\begin{lem}\label{lem:connected}
  Let\/ $K,L\in\linesout$ be two distinct lines such that $K\paral L$.
  There is a sequence $\Pi_1,\ldots,\Pi_n$ of planes in   
  $\fixoutf(\fixproj,\W)$ such that $K^\infty=L^\infty\in\overline{\Pi_i}$ for $i=1,\ldots,n$ and
  $K\subseteq\Pi_1$, $L\subseteq \Pi_n$, and 
  $\Pi_j, \Pi_{j+1}$ share a line for $j=1,\ldots,n-1$.
\end{lem}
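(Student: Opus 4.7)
Let $p := K^\infty = L^\infty$ denote the common point of $\overline{K}$ and $\overline{L}$ in $\W$. The plan is to reduce the problem to an affine polar space setting, connect $\overline{K}$ and $\overline{L}$ through a chain of lines passing through $p$ by moving in the residue polar space at $p$, and finally to lift the resulting chain of planes back to $\fixoutf(\fixproj,\W)$.

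First I would apply Lemma~\ref{lem:extend-no-lines} to fix a hyperplane $H$ of $\fixproj$ with $\W\subseteq H$ and $\overline{K},\overline{L}\nsubseteq H$. Inside the affine polar space $\fixoutf(\fixproj,H)$ the lines $K$ and $L$ remain proper and still share $p$ as their point at infinity. Any plane of $\fixproj$ not contained in $H$ automatically fails to lie in $\W$, so in the remainder of the construction it suffices to find planes of $\fixproj$ through $p$ that avoid $H$.

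Next I would pass to the residue $\fixproj_p$ of $\fixproj$ at $p$, whose points are the lines of $\fixproj$ through $p$ and whose lines are the planes through $p$. Since $\fixproj$ has rank at least $3$, $\fixproj_p$ is a polar space of rank at least $2$, and $H$ induces a hyperplane $H_p$ of $\fixproj_p$ consisting of those lines through $p$ lying in $H$; the points $\overline{K},\overline{L}$ of $\fixproj_p$ both fall outside $H_p$. The affine polar space $\fixoutf(\fixproj_p,H_p)$ is connected by the standard affine-polar connectedness result of \cite{cohenshult}, so there is a sequence $\overline{K}=M_0,M_1,\ldots,M_n=\overline{L}$ of lines of $\fixproj$ through $p$, none contained in $H$, such that each consecutive pair $M_{i-1},M_i$ spans a plane $\overline{\Pi_i}$ of $\fixproj$ through $p$. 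Since $M_{i-1}\nsubseteq H$, we have $\overline{\Pi_i}\nsubseteq H$, hence $\overline{\Pi_i}\nsubseteq\W$.

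Finally, I would set $\Pi_i := \overline{\Pi_i}\cap\pointsout$; each $\Pi_i$ is then a plane of $\fixoutf(\fixproj,\W)$ with $p\in\overline{\Pi_i}$, consecutive planes $\Pi_{i-1},\Pi_i$ meet along the proper line $M_{i-1}\cap\pointsout$ (nonempty since $M_{i-1}\nsubseteq H\supseteq\W$), and by construction $K\subseteq\Pi_1$, $L\subseteq\Pi_n$. I expect the main obstacle to be the third step, namely verifying connectedness of the affinized residue $\fixoutf(\fixproj_p,H_p)$; if one does not wish to rely on \cite{cohenshult}, an induction on rank combined with a Veblen-type argument in $\fixproj_p$ is needed, together with a quick check that $H_p$ is genuinely a hyperplane of $\fixproj_p$ (each plane $\pi$ through $p$ meets $H$ in at least a line through $p$, which is a point of $H_p$ on the line of $\fixproj_p$ determined by $\pi$).
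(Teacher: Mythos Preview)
Your proposal is correct and follows essentially the same route as the paper: extend $\W$ to a hyperplane $H$ via Lemma~\ref{lem:extend-no-lines}, pass to the quotient (residue) polar space at $p$, use the Cohen--Shult connectedness of the resulting affine polar space, and pull the chain of lines back to a chain of planes in $\fixoutf(\fixproj,\W)$. Your write-up is in fact slightly more explicit than the paper's in verifying that the recovered planes and their shared lines are proper in $\fixoutf(\fixproj,\W)$.
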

\begin{proof}
  By \ref{lem:extend-no-lines} we can extend $\W$ to a hyperplane $H$ of
  $\fixproj$  such that $K,L\nsubseteq H$. Take the point $a=K^\infty$. By
  \eqref{eq:comp-paral} we have $a=L^\infty$.   Now, take in $\fixproj$ the
  bundle of all the lines together with all the  planes through $a$.  This
  structure is, up to an isomorphism, a polar space $\fixproj'$, so called
  quotient polar space (cf. \cite{cohenshult}).
  The set $H'$, consisting of all the lines through $a$ contained in $H$, is a hyperplane in $\fixproj'$ induced by $H$.
  Then $\fixoutf(\fixproj',H')$ is an affine polar space, that in itself is
  connected (cf. \cite{cohenshult}). So there is in  $\fixoutf(\fixproj',H')$ a
  sequence of intersecting lines joining $K$ and $L$ as points of
  $\fixoutf(\fixproj',H')$. However, lines of $\fixoutf(\fixproj',H')$ are
  planes of $\fixoutf(\fixproj,H)$. As $\W\subseteq H$ these planes are also
  planes of $\fixoutf(\fixproj,\W)$.
\end{proof}

\subsection{Parallelism}

Let $K_1, K_2\in\linesout$.
Then 
\begin{multline}\label{eq:Vebl-paral}
  K_1 \parallel^\ast K_2\quad\text{iff}\quad
   K_1\cap K_2=\emptyset \text{ and there are two} 
      \text{ distinct lines } L_1, L_2\in\linesout \\
        \text{ crossing both of } K_1, K_2, \text{ such that } L_1\cap L_2\neq\emptyset.
\end{multline}
Let $\parallel$ be the transitive closure of $\parallel^\ast$.
It is clearly seen that $\parallel\subseteq\linesaff\times\linesaff$.

\begin{lem}
  The relation $\parallel$ is reflexive on $\linesaff$.
\end{lem}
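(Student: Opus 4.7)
The plan is to exhibit, for each $L \in \linesaff$, a companion line $M \in \linesout$ with $L \parallel^\ast M$. Since $\parallel^\ast$ is symmetric in $K_1, K_2$ by inspection of \eqref{eq:Vebl-paral}, we then also have $M \parallel^\ast L$, so the transitive closure yields $L \parallel M \parallel L$, hence $L \parallel L$.

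To produce $M$, set $a := L^\infty \in \W$ and, using rank at least $3$, pick a plane $\Pi'$ of $\fixproj$ containing $\overline{L}$; as $\overline{L} \not\subseteq \W$ we also have $\Pi' \not\subseteq \W$. The projective plane $\Pi'$ is thick, so the pencil of its lines through $a$ has cardinality at least $3$. If two distinct members of that pencil lay entirely in $\W$ they would span $\Pi'$ and force $\Pi' \subseteq \W$; hence some line $M' \neq \overline{L}$ through $a$ in $\Pi'$ is not contained in $\W$. Its trace $M := M' \cap \pointsout$ belongs to $\linesout$, and since $M' \cap \W$ must be a single point, $M^{\infty} = a$, so $M \in \linesaff$. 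Moreover $L \cap M = \emptyset$ in $\fixoutf$ because $\overline{L} \cap \overline{M} = \{a\} \subseteq \W$.

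The Veblen configuration is then immediate: choose $p \in L$ and two distinct points $q_1, q_2 \in M$, available since thickness of $\fixproj$ gives $|L|, |M| \geq 2$. All three points lie in the singular subspace $\Pi'$, so the lines $\overline{p, q_1}$ and $\overline{p, q_2}$ of $\fixproj$ exist; they are distinct (otherwise their common line would contain $q_1, q_2 \in \overline{M}$, so would equal $\overline{M}$, contradicting $p \in L \setminus \overline{M}$), and neither lies in $\W$ (each passes through $p \notin \W$). Their traces $L_1, L_2$ on $\pointsout$ are thus two distinct lines of $\linesout$, each crossing both $L$ (at $p$) and $M$ (at $q_1$, $q_2$ respectively), with $p \in L_1 \cap L_2$. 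This witnesses $L \parallel^\ast M$.

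The only delicate step is securing the auxiliary pencil-line $M'$ through $a$ that avoids $\W$; once $\Pi'$ is fixed this is handled by the subspace property of $\W$ combined with thickness of $\Pi'$. Everything else is a routine Veblen configuration inside the single projective plane $\Pi'$, so Lemma~\ref{lem:connected} is not needed here.
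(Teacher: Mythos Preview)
Your proof is correct and follows essentially the same approach as the paper: pick a (singular) plane through $\overline{L}$, find a second line through $L^\infty$ on that plane that is not swallowed by $\W$, and then read off the Veblen configuration inside that single projective plane to obtain $L\parallel^\ast M$ and hence $L\parallel L$. The paper's version is terser---it simply asserts the existence of suitable $K_2,L_1,L_2$ on the plane---while you spell out the pencil argument showing that at most one line through $a$ in $\Pi'$ can lie in $\W$, and you justify $|M|\geq 2$ via thickness; these are exactly the details the paper suppresses.
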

\begin{proof}
  Given a line $K_1\in\linesout$, considering that the rank of $\fixproj$ is at least 3, 
  take a plane $\pi$ containing $K_1$ in a maximal singular subspace through $K_1$. 
  There are lines 
  $K_2, L_1, L_2$ on $\pi$ such that $K_1\cap K_2=\emptyset$ (that is $K_1^\infty = K_2^\infty$), 
  $L_1\neq L_2$, $L_1\cap L_2\neq\emptyset$, and 
  $K_i\cap L_j\neq\emptyset$ for $i,j=1,2$. Thus $K_1\parallel^\ast K_2$ by \eqref{eq:Vebl-paral}.
  This means that $K_1\parallel K_2$ and $K_2\parallel K_1$, which by transitivity implies that  
  $K_1\parallel K_1$.
\end{proof}

\begin{prop}\label{prop:paral-coinc}
  Let $\W$ be a subspace of\/ $\fixproj$.
  The relation $\paral$ defined in \eqref{eq:comp-paral} 
  and the relation $\parallel$ coincide on the set of lines of\/ $\fixoutf(\fixproj,\W)$.
\end{prop}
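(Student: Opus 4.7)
The plan is to prove both inclusions, $\parallel\subseteq\paral$ and $\paral\subseteq\parallel$. Since $\paral$ is visibly an equivalence relation (symmetric from \eqref{eq:comp-paral}, and transitive because two distinct lines of $\fixproj$ share at most one point), the first direction reduces to showing $\parallel^\ast\subseteq\paral$. The second direction is the deeper of the two and will lean on Lemma~\ref{lem:connected}.

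For $\parallel^\ast\subseteq\paral$, suppose $K_1\parallel^\ast K_2$ with witnesses $L_1,L_2\in\linesout$ meeting at $p$, and write $p_i:=L_i\cap K_1$, $q_i:=L_i\cap K_2$. I aim to trap all four lines inside a common singular plane $\pi$ of $\fixproj$. In the generic case $p\notin K_1\cup K_2$ (so $p_1\neq p_2$ and $q_1\neq q_2$), the one-or-all axiom applied to $p$ and $K_1$ — using $p\in p_1^\perp\cap p_2^\perp$ — forces $p\in K_1^\perp$, making $\pi:=\langle p,K_1\rangle$ a singular plane containing $L_1,L_2$, hence $q_1,q_2$, hence $K_2$. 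In the projective plane $\overline{\pi}$ the two distinct lines $\overline{K_1},\overline{K_2}$ meet in a single point, and this point cannot be proper (as $K_1\cap K_2=\emptyset$), so it lies in $\W$ and $K_1\paral K_2$.

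For $\paral\subseteq\parallel$, take $K\paral L$ with common improper point $a$. Apply Lemma~\ref{lem:connected} to produce a chain of semiaffine planes $\Pi_1,\dots,\Pi_n$ in $\fixoutf(\fixproj,\W)$ with $K\subseteq\Pi_1$, $L\subseteq\Pi_n$, $a\in\overline{\Pi_i}$ for every $i$, and $N_j:=\Pi_j\cap\Pi_{j+1}$ a proper line. Unwinding the quotient-polar-space argument inside that proof, the shared line $N_j$ can be taken to pass through $a$ in $\fixproj$: the chain comes from a sequence of intersecting lines of $\fixoutf(\fixproj',H')$, and intersecting lines share a point of $\fixproj'$, which is a line through $a$ in $\fixproj$, automatically proper because it lies outside $H'$ and hence outside $\W$. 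Thus $N_j\in\linesaff$ with $N_j^\infty=a$. It remains to prove the sublemma that any two distinct affine lines $M,M'$ contained in a common plane $\Pi$ of the complement with $M^\infty=M'^\infty$ satisfy $M\parallel^\ast M'$. Since $\overline{\Pi}\cap\W$ is either $\{a\}$ or a single line through $a$, thickness of $\fixproj$ supplies a proper point $r\notin M\cup M'$ and at least two distinct lines through $r$ in $\overline{\Pi}$ avoiding $a$; such lines are proper and meet $M,M'$ at proper points, furnishing the $\parallel^\ast$-witnesses. Chaining $K\parallel^\ast N_1\parallel^\ast\cdots\parallel^\ast N_{n-1}\parallel^\ast L$ yields $K\parallel L$.

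The main obstacle lies in the degenerate subcases of the forward direction, where witness intersections collapse, e.g.\ $p_1=p_2=p\in K_1$. Then one-or-all applied to $p$ and $K_1$ is vacuous, and although one-or-all applied to $p$ and $K_2$ still puts $K_2$ in the singular plane $\langle p,K_2\rangle$, this plane need not contain $K_1$; in principle $K_1,K_2$ could sit as skew lines inside a $3$-dimensional singular subspace with empty projective intersection. Overcoming this requires either reading ``crossing'' in the definition of $\parallel^\ast$ strictly (four distinct crossing points), or using thickness of $\fixproj$ to replace $L_1,L_2$ by a generic pair with $p\notin K_1\cup K_2$; the affinity hypothesis $K_1,K_2\in\linesaff$ is also likely to play a role in excluding the $3$D-skew pathology.
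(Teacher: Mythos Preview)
Your two directions mirror the paper's: for $\parallel^\ast\Rightarrow\paral$ you argue coplanarity via one-or-all and then invoke the projective plane; for $\paral\Rightarrow\parallel$ you use Lemma~\ref{lem:connected} and chain coplanar $\parallel^\ast$-steps. Two comments.

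On the forward direction, the paper simply asserts ``$K_1,K_2$ are disjoint and coplanar'' and moves on; your diagnosis of the degenerate case $p\in K_1$ is correct, and of your three proposed fixes only the strict reading of ``crossing'' actually works. Indeed, take any singular plane $\pi$, a line $K_2\subset\pi$, a point $p\in\pi\setminus K_2$, and a line $K_1$ through $p$ with $K_1\not\subset\pi$ (such a line exists since the rank is at least $3$); any two lines in $\pi$ from $p$ to $K_2$ furnish degenerate witnesses, yet $\overline{K_1}\cap\overline{K_2}=\emptyset$ regardless of $\W$. No generic replacement pair exists here, and affinity of $K_1,K_2$ plays no role. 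That the paper intends the strict reading is confirmed by its formulation of the Veblenian axiom (where $K_1,K_2$ are explicitly not through $p$) and by the claim just before the proposition that $\parallel\subseteq\linesaff\times\linesaff$, which would fail under the lax reading.

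On the reverse direction, you need not unwind the quotient construction to see that the shared line $M_j:=\Pi_j\cap\Pi_{j+1}$ passes through $a$: since $a\in\overline{\Pi_j}\cap\overline{\Pi_{j+1}}$ and two distinct singular planes sharing a line meet precisely in that line, $a\in\overline{M_j}$ follows directly. This is how the paper argues it, and it spares you from reopening the proof of Lemma~\ref{lem:connected}.
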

\begin{proof}
  Let $K_1, K_2\in\linesout$.
  If $K_1=K_2$, then $K_1\paral K_2$ and $K_1\parallel K_2$. So, assume that $K_1\neq K_2$.

  Consider the case where $K_1\paral K_2$. By \eqref{eq:comp-paral} it means that
  $\overline{K}\cap \overline{L}\cap W\neq\emptyset$, and consequently $K_1^\infty=K_2^\infty=a$ for some $a\in\W$. 
  This implies that $K_1\cap K_2=\emptyset$. Assume that $K_1$ and $K_2$ are coplanar, and $\Pi$ is the plane of $\fixoutf(\fixproj,\W)$ containing both of $K_1,K_2$. The plane $\overline{\Pi}$ is, up to an isomorphism, a projective plane, so it is Veblenian. Thus, by \eqref{eq:Vebl-paral}, $K_1\parallel^\ast K_2$. If $K_1$ and $K_2$ are not coplanar, then by  
  \ref{lem:connected} there is a sequence of planes $\Pi_1,\ldots,\Pi_n$  such that $K_1\subseteq\Pi_1$, $K_2\subseteq \Pi_n$,  $a\in\overline{\Pi_i}$ for $i=1,\ldots,n$, and
  $\Pi_j, \Pi_{j+1}$ share a line for $j=1,\ldots,n-1$. Let $\Pi_j\cap \Pi_{j+1}=M_j$. Note that $a\in \overline{M_1},\ldots,\overline{M_{n-1}}$ and $M_j, M_{j+1}$ are coplanar. Therefore $M_j\parallel^\ast M_{j+1}$. Moreover, $K_1\parallel^\ast M_1$ and $M_{n-1}\parallel^\ast K_2$ by the same reasons. So finally we get
  $K_1\parallel K_2$.

  Now, assume that $K_1\parallel^\ast K_2$. 
  Then $K_1, K_2$ are disjoint and coplanar.
  Thus $\overline{K_1}, \overline{K_2}$ meet in the closure of some plane, this means that they meet in $\W$. By \eqref{eq:comp-paral} it gives
  $K_1\paral K_2$. If  $K_1\parallel K_2$ then there is a sequence of proper lines
  $L_1,\ldots, L_n$ such that $K_1\parallel^\ast L_1\parallel^\ast\ldots\parallel^\ast L_n\parallel^\ast K_2$. 
  So, from the previous reasoning we get $K_1\paral L_1\paral\ldots\paral L_n\paral K_2$. 
  As the relation $\paral$ is transitive we have $K_1\paral K_2$.
\end{proof}


As an immediate consequence of \ref{prop:paral-coinc} we get

\begin{cor}\label{cor:affine-lines}
  Affine lines can be distinguished in the set $\linesout$ as those parallel to themselves.
\end{cor}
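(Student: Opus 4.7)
The plan is to read off the equivalence directly from the preceding proposition. By the paper's definition just below \eqref{eq:comp-paral}, a line $L\in\linesout$ is affine precisely when $L\paral L$, i.e.\ when $\overline{L}\cap\W\neq\emptyset$. Proposition~\ref{prop:paral-coinc} established that $\paral$ and $\parallel$ coincide on $\linesout$; specialising that equivalence to the diagonal case $K_1=K_2=L$ gives $L\paral L$ iff $L\parallel L$. Combining the two equivalences yields $L\in\linesaff$ iff $L\parallel L$, which is exactly the corollary.

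There is no real obstacle here: the heavy lifting has already been done. The implication $L\in\linesaff\Rightarrow L\parallel L$ is essentially the content of the reflexivity lemma proved just before Proposition~\ref{prop:paral-coinc}, while the reverse implication was already recorded in the observation $\parallel\,\subseteq\,\linesaff\times\linesaff$ made immediately after the definition of $\parallel^\ast$. Thus the only step worth spelling out is the tautology that ``affine'' means ``$\paral$-reflexive'', after which Proposition~\ref{prop:paral-coinc} closes the matter by transporting $\paral$-reflexivity to $\parallel$-reflexivity. I would present the proof as a single sentence citing \eqref{eq:comp-paral} and Proposition~\ref{prop:paral-coinc}.
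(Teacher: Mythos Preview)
Your proposal is correct and matches the paper's approach exactly: the paper states this corollary as an immediate consequence of Proposition~\ref{prop:paral-coinc}, and your argument just unpacks that immediacy via the definition of ``affine line'' as $L\paral L$.
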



\subsection{Recovering}


If $\W$ is a hyperplane it follows by \cite[2.7]{cohenshult} that:

\begin{prop}\label{prop:coh-shult}
  Let\/ $\fixproj$ be a thick nondegenerate polar space of rank at least 2
  and let\/ $H$ be its hyperplane. 
  The polar space $\fixproj$ can be recovered in the complement\/ $\fixoutf(\fixproj,H)$.
\end{prop}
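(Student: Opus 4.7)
The plan is to reduce the statement to the recovery theorem of Cohen and Shult for affine polar spaces, \cite[2.7]{cohenshult}, since $\fixoutf(\fixproj, H)$ is precisely an affine polar space in their sense whenever $H$ is a hyperplane. To make the intrinsic reconstruction visible, I would verify that each ingredient needed to rebuild $\fixproj$ can be read off the incidence structure of the complement alone, using the tools already developed in this section.

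First, I would distinguish the affine lines inside $\linesout$ by means of \ref{cor:affine-lines}, and then note that by \ref{prop:paral-coinc} the geometric parallelism $\paral$ coincides with the purely combinatorial relation $\parallel$ built from the Veblen-type condition \eqref{eq:Vebl-paral}. The $\paral$-equivalence classes on $\linesaff$ are therefore intrinsic to the complement, and they will play the role of the non-deep improper points of $H$.

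Next I would reconstruct the point set of $\fixproj$ as the disjoint union of the proper points of $\fixoutf(\fixproj, H)$, the parallel classes of affine lines (each class encoding a single common point at infinity), and at most one additional point coming from a possible deep point on $H$. By part~(i) of \ref{lem:deeppoints} at most one such deep point occurs and it lies in $\rad H$, so its adjunction is forced and unique; it is detectable intrinsically from the pattern of maximal singular subspaces of the complement whose closures share a common unreachable improper point. The lines of $\fixproj$ outside $H$ are then recovered from $\linesout$ by taking closures, adjoining the direction of an affine line and leaving a projective line unchanged, while the lines of $H$ arise as $\Pi^{\infty}$ for the semiaffine planes $\Pi$ of the complement, supplemented, when needed, by the improper lines through the deep point, which come from parallel classes of such planes. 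Incidence between the reconstructed points and lines can then be defined directly in terms of incidence in $\fixoutf(\fixproj, H)$ and of the equivalence relation $\paral$.

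The main obstacle is the coherent handling of the deep point when it exists: by definition it is reached by no line of the complement, so it must be pinned down through higher-dimensional singular subspaces rather than by lines, and the verification that the extended incidence structure reproduces $\fixproj$ up to isomorphism requires care at this boundary. This is exactly the delicate part of \cite[2.7]{cohenshult}, and it is their argument that ultimately closes the proof in full generality for hyperplanes in thick nondegenerate polar spaces of rank at least $2$.
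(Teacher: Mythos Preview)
Your proposal correctly identifies that the content of this proposition is exactly the recovery theorem of Cohen and Shult, and you ultimately defer to \cite[2.7]{cohenshult} for the real work. That is precisely what the paper does: the proposition is introduced with the words ``it follows by \cite[2.7]{cohenshult} that'' and is given no further proof. Its only role is to dispose of the hyperplane case before the standing assumption that $\W$ is not a hyperplane is adopted for the remainder of the paper.

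The additional machinery you bring in from this section --- \ref{prop:paral-coinc}, \ref{cor:affine-lines}, and \ref{lem:deeppoints} --- is not only superfluous but slightly misapplied. All of those results are proved under the blanket hypothesis at the start of Section~3 that $\fixproj$ has rank at least~$3$ (indeed, the proof that $\parallel$ is reflexive on $\linesaff$, and hence \ref{prop:paral-coinc} and \ref{cor:affine-lines}, explicitly uses a plane through a given line inside a maximal singular subspace, which needs rank~$\geq 3$). The proposition, however, is stated for rank at least~$2$. So your intermediate reconstruction does not actually cover the rank-$2$ case; the argument is saved only by your closing appeal to \cite[2.7]{cohenshult} ``in full generality.'' The cleaner route, and the one the paper takes, is simply to cite Cohen--Shult outright and move on.
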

\noindent
So, from now on we additionally assume that $\W$ is not a hyperplane.

By \ref{prop:paral-coinc} the relation $\paral$, which is the natural parallelism in our complement $\fixoutf(\fixproj,\W)$, can be expressed purely in terms of $\fixoutf(\fixproj,\W)$. Note that our parallelism is partial: it is defined only on affine lines. However it is not a problem in view of \ref{cor:affine-lines}.
From \ref{lem:deeppoints}\eqref{lem:sub-spiky} there is a bijection between the sets
$\W=\{L^\infty\colon L\in \linesaff\}$ and $\{[L]_{\parallel}\colon L\in\linesaff\}$. Thus we can recover $\W$ pointwise in a standard way:
\begin{ctext}
 points of the horizon $\W$ are identified with equivalence classes of parallelism \\ i.e. directions of affine lines of the complement $\fixoutf(\fixproj,\W)$. 
\end{ctext}

Let us introduce a relation $\mathord{\sim}\subseteq\linesaff\times\linesaff$ 
defined by the following condition:
\begin{equation}\label{eq:rel-falka}
  K_1\sim K_2 \;\iff \;\pforall{a\in K_1} \pforall{M\in\linesaff} [\;a\in M\Rightarrow M\nparallel K_2\;].
\end{equation}
In the sense of Euclid's Fifth Postulate it could be read as \emph{anti-euclidean parallelism}.
A lot more useful for us is its derivative 
$\mathord{\equiv}\subseteq\linesaff/_\parallel\times\linesaff/_\parallel$ defined as follows:
\begin{equation}\label{eq:rel-kreski}
[K_1]_\parallel \; \equiv \; [K_2]_{\parallel}\;\iff\; 
\pforall{M\in [K_1]_\parallel}\pforall{N\in [K_2]_\parallel}[\;M\sim N \text{ and }N\sim M\;].
\end{equation}

\begin{lem}\label{lem:equiv}
  Let\/ $M$, $N$ be two nonparallel affine lines. The following conditions are equivalent:
  \begin{sentences}
  \item
    $[M]_\parallel\equiv [N]_\parallel$,
  \item
    there is a deep line $L\subseteq\W$, such that $M^\infty,N^\infty\in L$.
  \end{sentences}
\end{lem}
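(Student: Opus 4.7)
My plan is to prove each implication by contrapositive. I will use two ingredients repeatedly: first, for any semiaffine plane $\Pi$ of $\fixoutf(\fixproj,\W)$ the set $\Pi^{\infty}$ is precisely the line $\overline{\Pi}\cap\W$ (every point $p$ of that line joins to any proper $q\in\Pi$ through an affine line of $\Pi$ with direction $p$); second, in $\fixproj$ a point collinear with two distinct points of a line is collinear with every point of the line by one-or-all, so $\{a\}\cup L$ spans a singular plane whenever $a$ is such a point relative to $L$.

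For the direction $(ii)\Rightarrow(i)$, assume a deep line $L\subseteq\W$ contains $M^{\infty}$ and $N^{\infty}$, and suppose toward contradiction that $[M]_{\parallel}\not\equiv[N]_{\parallel}$. By the symmetry of the lemma in $M,N$, I may take $M'\parallel M$ and $N'\parallel N$ with $M'\not\sim N'$; \eqref{eq:rel-falka} then yields $a\in M'$ and an affine line $K\ni a$ with $K\parallel N'$, so $K^{\infty}=N^{\infty}$. Since $M^{\infty}\neq N^{\infty}$ we have $M'\neq K$. The point $a$ is collinear with both $M^{\infty}$ and $N^{\infty}$, hence with all of $L$, so $\{a\}\cup L$ spans a plane $\overline{\Pi}$ of $\fixproj$ containing both $\overline{M'}=\overline{a,M^{\infty}}$ and $\overline{K}=\overline{a,N^{\infty}}$. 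Then $\Pi:=\overline{\Pi}\setminus\W$ is a semiaffine plane of the complement satisfying $\Pi^{\infty}=\overline{\Pi}\cap\W=L$, contradicting that $L$ is deep.

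For the direction $(i)\Rightarrow(ii)$, assume $[M]_{\parallel}\equiv[N]_{\parallel}$. I first show $M^{\infty}$ and $N^{\infty}$ are collinear in $\fixproj$. Otherwise, since $\overline{M}\cap\W=\{M^{\infty}\}\neq\{N^{\infty}\}$ gives $N^{\infty}\notin\overline{M}$, one-or-all selects a unique $c\in\overline{M}$ collinear with $N^{\infty}$; non-collinearity of $M^{\infty}$ and $N^{\infty}$ forces $c\neq M^{\infty}$, so $c$ is a proper point of $M$. The line $\overline{c,N^{\infty}}$ of $\fixproj$ meets $\W$ only at $N^{\infty}$ (else the subspace property of $\W$ would force $c\in\W$), hence its trace in $\pointsout$ is an affine line through $c\in M$ parallel to $N$, violating $M\sim N$. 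Thus $M^{\infty}$ and $N^{\infty}$ are collinear and, by the subspace property of $\W$, $L:=\overline{M^{\infty},N^{\infty}}\subseteq\W$. To see $L$ is deep, if some semiaffine plane $\Pi$ satisfied $\Pi^{\infty}=L$, the first ingredient would yield affine lines $M'\parallel M$ and $N'\parallel N$ in $\Pi$; having distinct directions they would meet at a point $a\in\Pi$, witnessing $M'\not\sim N'$ and contradicting (i).

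The step I expect to be most delicate is the coplanarity of $M'$ and $K$ in the direction $(ii)\Rightarrow(i)$: two intersecting lines in a polar space need not in general be coplanar, and the argument hinges on having the line $L$ through both points at infinity so that one-or-all can produce the plane $\overline{\Pi}$.
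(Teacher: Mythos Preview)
Your proof is correct and follows essentially the same approach as the paper: both directions use one-or-all to relate collinearity of $M^\infty,N^\infty$ to the existence of an affine line through a proper point with the opposite direction, and both contradict deepness by exhibiting a semiaffine plane whose horizon is $L$. The only notable difference is in $(ii)\Rightarrow(i)$: the paper applies one-or-all to $N^\infty$ versus $\overline{M'}$ and splits into two cases (non-collinearity of $M^\infty,N^\infty$, or the plane $\langle N^\infty,M'\rangle$), whereas you apply one-or-all to the proper point $a$ versus $L$ directly and build $\langle a,L\rangle$ without a case split---a slightly cleaner variant of the same idea.
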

\begin{proof}
  (i)$\;\Rightarrow\;$(ii):
  From one-or-all axiom, $M^\infty$ must be collinear with at least one point of the 
  line $\overline{N}$. Moreover, $M^\infty$ cannot be collinear with a proper point 
  of $\overline{N}$, as  $[M]_\parallel\equiv [N]_\parallel$. Thus $M^\infty$ is 
  collinear with the unique improper point of $\overline{N}$, which is $N^\infty$. 

  Let $L$ be the line through $M^\infty$, $N^\infty$. Assume, that $\Pi$ is a 
  semiaffine plane with $L={\Pi}^\infty$. Then, there are some affine lines 
  $M_1, N_1\subseteq\Pi$  with $M^\infty=M_1^\infty$ and $N^\infty=N_1^\infty$. 
  So, either $M_1\parallel N_1$ or $M_1$ and $N_1$ share a proper point. 
  In view of \eqref{eq:rel-kreski}, in both cases we get $[M]_\parallel\not\equiv [N]_\parallel$.

  (ii)$\;\Rightarrow\;$(i):
  Assume that $[M]_\parallel\not\equiv [N]_\parallel$. Due to \eqref{eq:rel-falka} 
  and \eqref{eq:rel-kreski} there is a proper point $a\in M$ and an affine line $K$ 
  such that $a\in K\parallel N$ (or the symmetrical case holds). This means that $a$ 
  and $N^\infty$ are collinear in $\fixproj$.
  The one-or-all axiom implies, that either there are no other points on $M$ that
   are collinear with $N^\infty$, or $N^\infty$ is collinear with all points on $M$. 
   In the first case $N^\infty$ is not collinear with $M^\infty$, in the latter 
   $\gen{N^\infty,M}\nsubseteq\W$ is the plane containing the line $\overline{M^\infty, N^\infty}$.
\end{proof}

One can note, that the relation $\equiv$ defined by \eqref{eq:rel-kreski} and 
the relation $\equiv$ introduced in \cite{cohenshult} coincide, though their 
definitions are expressed differently. Besides, our relation is not transitive, 
but the reflexive closure of its analogue in \cite{cohenshult} is an equivalence 
relation. This benefit is strictly caused by some hyperplane properties 
(see \ref{lem:deeppoints}\eqref{fact:deephyper}). Nevertheless, we can overcome 
this inconvenience and  define ternary relation of collinearity on the horizon $\W$.

\begin{lem}\label{lem:strange-lines}
  If\/ $K_1$, $K_2$, $K_3$ are pairwise nonparallel affine lines such that
  $[K_i]_\parallel\equiv [K_{i+1\mod 3}]_\parallel$ for $i=1,2,3$, 
  then points $K_1^\infty$, $K_2^\infty$, $K_3^\infty$ are on a line.
\end{lem}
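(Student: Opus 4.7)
My plan is to argue by contradiction. Suppose $K_1^\infty, K_2^\infty, K_3^\infty$ are not collinear. Applying Lemma~\ref{lem:equiv} to each pair $(K_i, K_{i+1\bmod 3})$ produces three pairwise distinct deep lines $L_1, L_2, L_3 \subseteq \W$ (where $L_1$ joins $K_1^\infty, K_2^\infty$ and cyclically); they form a genuine triangle inside $\W$.

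First I place the three points and three deep lines in a common singular plane of $\fixproj$ contained in $\W$. Since $K_3^\infty$ is collinear in $\fixproj$ with both $K_1^\infty, K_2^\infty \in L_1$, the one-or-all axiom gives $K_3^\infty \adjac r$ for every $r \in L_1$, so $\Pi := \langle K_3^\infty, L_1 \rangle$ is singular; because $\W$ is a subspace containing $L_1$ and $K_3^\infty$, every line of $\Pi$ meets $\W$ in at least two points, which forces $\Pi \subseteq \W$, and by symmetry $L_1, L_2, L_3 \subseteq \Pi$. Next, for a proper $p \in K_3$, I apply one-or-all to $p$ and $L_1$: the ``all of $L_1$'' alternative would create a singular plane $\langle p, L_1 \rangle$ meeting $\W$ exactly in $L_1$, giving a complement plane with horizon $L_1$ and contradicting $L_1$ deep. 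So $p \adjac q$ for a unique $q \in L_1$; symmetric one-or-all calls on $K_1^\infty$ (respectively $K_2^\infty$) against $\overline{K_3}$, using deepness of $L_3$ (respectively $L_2$), force $q \neq K_1^\infty, K_2^\infty$. A further one-or-all call pushes $q$ into $\overline{K_3}^{\perp}$, producing a singular plane $\Sigma := \langle \overline{K_3}, q \rangle$ with horizon $R := \overline{q, K_3^\infty}$.

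The hard part, which I expect to be the main obstacle, is turning this structural picture into an actual contradiction. My plan is to introduce the auxiliary affine line $M := \overline{p, q} \subseteq \Sigma$ with $M^\infty = q$; Lemma~\ref{lem:equiv} applied through the deep line $L_1$ then gives $[M]_\parallel \equiv [K_1]_\parallel$ and $[M]_\parallel \equiv [K_2]_\parallel$, yielding a new triple $(M, K_1, K_2)$ of pairwise equivalent classes whose infinities genuinely do lie on one line. Iterating the cevian construction symmetrically from $K_1$ and $K_2$ produces singular planes $\Sigma_1, \Sigma_2$ in $\fixproj$ with horizons $R', R''$ through $K_1^\infty$ and $K_2^\infty$ respectively inside $\Pi$. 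Combining the $K_i \sim K_j$ constraints from pairwise $\equiv$ with repeated one-or-all invocations to transport collinearity between the three auxiliary planes, I aim to exhibit a proper point $x$ with $L_1 \subseteq x^\perp$, which violates $L_1^{\perp} \subseteq \W$ (the algebraic reformulation of $L_1$ deep). The principal technical difficulty I foresee is the careful one-or-all bookkeeping needed to force such an $x$ into existence rather than merely remaining consistent with the equivalences at hand.
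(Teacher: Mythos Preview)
Your preliminary steps are sound: the three deep lines $L_1,L_2,L_3$ do sit in a singular plane $\Pi\subseteq\W$, the point $q\in L_1$ with $p\adjac q$ exists and is distinct from $K_1^\infty,K_2^\infty$, and the plane $\Sigma=\langle\overline{K_3},q\rangle$ with improper line $R=\overline{q,K_3^\infty}$ is correctly built. But the proof stops exactly where the content is. You yourself flag that producing a proper $x$ with $L_1\subseteq x^\perp$ is the ``main obstacle'', and nothing in the cevian iteration you sketch forces such an $x$ to appear: each application of one-or-all hands you a \emph{single} point of $L_1$ visible from a chosen proper point, never the whole line, and the $\equiv$ hypotheses are precisely the statement that no proper point on $K_1,K_2,K_3$ sees more than one point of the opposite deep line. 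Piling up auxiliary planes $\Sigma,\Sigma_1,\Sigma_2$ does not change this; you are recursively generating more instances of the same local configuration without a mechanism that collapses them. As written this is a programme, not a proof.

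The paper avoids this entirely by changing the ambient complement. Extend $\W$ to a hyperplane $H\supseteq\W$; a plane of $\fixoutf(\fixproj,H)$ whose closure contains $L_i$ would already be a plane of $\fixoutf(\fixproj,\W)$, so $L_1,L_2,L_3$ remain deep in the \emph{hyperplane} complement $\fixoutf(\fixproj,H)$. Now one invokes Cohen--Shult: every deep line of a hyperplane complement passes through the unique point $d$ of $\rad H$. Hence $d\in L_1\cap L_2\cap L_3$, and three pairwise distinct lines forming a triangle cannot be concurrent, so the triangle degenerates and $K_1^\infty,K_2^\infty,K_3^\infty$ are collinear. What you are attempting is, in effect, an ad hoc reproof of the Cohen--Shult statement that deep lines are concurrent through $\rad H$; that result is not a bookkeeping exercise with one-or-all, and there is no reason to redo it here when the reduction to a hyperplane makes it directly citable.
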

\begin{proof}
  Let $a=K_1^\infty$, $b=K_2^\infty$, $c=K_3^\infty$.
  By \ref{lem:equiv} there are improper lines $L=\overline{a,b}$, $M=\overline{b,c}$, 
  $N=\overline{c,a}$. Let $H$ be a hyperplane containing $\W$. 
  If in $\fixoutf(\fixproj,H)$ there is a plane, which closure contains one of 
  the lines $L$, $M$ or $N$, then we also have such plane in  $\fixoutf(\fixproj,\W)$, 
  that contradicts \ref{lem:equiv}. Thus, $L,M,N\subseteq H$ are deep lines in relation 
  to $\fixoutf(\fixproj,H)$. By 2.3 of \cite{cohenshult} this means that each of $L$, $M$ and $N$ 
  contains a point of $\rad H$. Let $d\in\rad H$.  Then, by  1.3 of \cite{cohenshult}, 
  $H=d^\perp$, $\{d\}=\rad H$, and $d$ is the unique deep point of $H$.
  As we have $d\in L,M,N$, it must be $L=M=N$.
\end{proof}

\begin{lem}\label{lem:ternary-collin}
  Let $K_1$, $K_2$, $K_3$ be pairwise nonparallel affine lines.
  Points $K_1^\infty$, $K_2^\infty$, $K_3^\infty$
  are on a line iff one of the following holds:
  \begin{sentences}
  \item  
    there are affine lines $M_1\parallel K_1$, $M_2\parallel K_2$,
    $M_3\parallel K_3$ such that  $M_1, M_2, M_3$  form a triangle in $\fixoutf(\fixproj,\W)$, 
  \item
    $[K_1]_\parallel\equiv [K_2]_\parallel$, $[K_2]_\parallel\equiv [K_3]_\parallel$, 
    and $[K_3]_\parallel\equiv [K_1]_\parallel$.
  \end{sentences}
\end{lem}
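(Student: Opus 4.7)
The plan is to prove each implication by splitting cases according to whether the improper line through $K_1^\infty, K_2^\infty, K_3^\infty$ is deep; the alternatives (i) and (ii) correspond to the non-deep and deep cases respectively.

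For the direction $\Rightarrow$, assume the three points $K_1^\infty, K_2^\infty, K_3^\infty$ lie on a common improper line $L$. If $L$ is not deep, I take a plane $\Pi$ of $\fixoutf(\fixproj,\W)$ with $L = \Pi^\infty$; then $\overline{\Pi}\cap\W = L$, since the alternative $\overline{\Pi}\subseteq\W$ is excluded by $\Pi\neq\emptyset$. In the projective plane $\overline{\Pi}$ I select lines $M_1', M_2'$ through $K_1^\infty, K_2^\infty$ respectively, both distinct from $L$. They cross at a point $p$ which cannot lie on $L$, since otherwise $p = K_1^\infty = K_2^\infty$, contradicting pairwise nonparallelism of $K_1, K_2$. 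The crucial step is to select $M_3'$ through $K_3^\infty$ distinct from both $L$ and the line joining $p$ with $K_3^\infty$; this is precisely where thickness of $\fixproj$ is used, guaranteeing at least three lines through $K_3^\infty$ in $\overline{\Pi}$. Setting $M_i := M_i'\setminus\W$ yields affine lines with $M_i\parallel K_i$ that form a triangle in $\fixoutf(\fixproj,\W)$, which is condition (i). If instead $L$ is deep, then the (ii)$\Rightarrow$(i) direction of Lemma~\ref{lem:equiv}, applied to each nonparallel pair $K_i, K_j$, yields $[K_i]_\parallel\equiv[K_j]_\parallel$, which is condition (ii).

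For the direction $\Leftarrow$, first assume (i). The triangle $M_1, M_2, M_3$ lies in a singular plane $\Pi$ of $\fixproj$, so $K_i^\infty = M_i^\infty\in\overline{\Pi}\cap\W$ for each $i$. The intersection $\overline{\Pi}\cap\W$ is a proper subspace of the projective plane $\overline{\Pi}$ (proper because $\Pi$ still contains a proper point), and it contains three pairwise distinct points (distinctness from pairwise nonparallelism of $K_1, K_2, K_3$); hence it is an improper line through $K_1^\infty, K_2^\infty, K_3^\infty$. If instead (ii) holds, then the three equivalences supply exactly the hypotheses of Lemma~\ref{lem:strange-lines}, which immediately delivers the required collinearity.

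The main obstacle is the triangle construction inside $\overline{\Pi}$ in the non-deep subcase of the $\Rightarrow$ direction: one must guarantee that the three chosen candidate lines are genuinely non-concurrent, and this is the unique place where the thickness assumption on $\fixproj$ is actively invoked. Everything else reduces to routine applications of the preceding lemmas and basic incidence in the projective plane $\overline{\Pi}$.
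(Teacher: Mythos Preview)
Your proof is correct and follows essentially the same approach as the paper: the case split on whether the improper line is deep, the appeal to Lemma~\ref{lem:equiv} in the deep case, and the use of Lemma~\ref{lem:strange-lines} for the backward direction under (ii) all match. The only difference is that the paper's forward direction is terser---it simply asserts that if (i) fails then no plane $\Pi$ satisfies $L=\Pi^\infty$---whereas you explicitly carry out the triangle construction inside $\overline{\Pi}$ and note where thickness is needed, which is a genuine improvement in detail rather than a different argument.
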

\begin{proof}
  Assume that $K_1^\infty$, $K_2^\infty$, $K_3^\infty$ are on a line $L$. If (i)
  does not hold, then there is no plane $\Pi$ in $\fixoutf(\fixproj,\W)$ with
  $L=\Pi^{\infty}$. This means that $L$ is a deep line and by \ref{lem:equiv} we
  get (ii). 

  Now, assume that (i) is the case. Take a plane $\Pi$ spanned by the triangle $M_1,
  M_2, M_3$.  Then $K_1,K_2,K_3\subseteq \Pi$ and $K_1^\infty$, $K_2^\infty$,
  $K_3^\infty$ are on a  line $\Pi^\infty$. If (ii) is fulfilled then
  $K_1^\infty$, $K_2^\infty$, $K_3^\infty$ are on a line directly by
  \ref{lem:strange-lines}.
\end{proof}

The meaning of \ref{lem:ternary-collin} is that we are able to recover improper lines 
regardless of whether $\W$ is flappy or not.
Let $\bigl[[K]_\parallel, [L]_\parallel \bigr]_{\equiv}:=
             \bigl\{[M]_\parallel\colon [M]_\parallel\equiv [K]_\parallel,
[L]_\parallel\bigr\}$.
Then new lines can be grouped into two sets:
\begin{multline*}
 \lines' := \Bigl\{\bigl[[K]_\parallel, [L]_\parallel \bigr]_{\equiv}\colon
 [K]_\parallel\equiv [L]_\parallel \text{ and } K \nparallel L \Bigr\}, \\
  \lines'' := \bigl\{\Pi^\infty\colon \Pi \text{ is a semiaffine plane of } \fixoutf(\fixproj,\W)\bigr\}.
\end{multline*}
All our efforts in this paper essentially amount to the following isomorphism 
\begin{cmath}
  \fixproj\cong\bstruct{\pointsout\cup\linesaff/_\parallel,\; \linesout\cup\lines'\cup\lines'',\; \mathord{\inc}}.
\end{cmath}
A new point $[K]_\parallel$ is incident to a line $L\in\linesout$ iff $K\parallel L$.
It is incident to a line $L\in\lines'$ iff there is $M\in\linesout$ such that $\big[[K]_\parallel, [M]_\parallel\big]_{\equiv}= L$. 
Eventually, it is incident to a line $L\in\lines''$ iff $K\subseteq\Pi$ and $L=\Pi^\infty$.

\begin{thm}\label{thm:the-main}
  Let\/ $\fixproj$ be a thick, nondegenerate, embeddable polar space of rank at least 3, 
  and $\W$ be its subspace, that is contained in a hyperplane.
 The polar space $\fixproj$ can be recovered in the complement\/ $\fixoutf(\fixproj,\W)$.
\end{thm}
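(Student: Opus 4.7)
The plan is to assemble the isomorphism
\begin{cmath}
\fixproj\cong\bstruct{\pointsout\cup\linesaff/_\parallel,\; \linesout\cup\lines'\cup\lines'',\; \mathord{\inc}}
\end{cmath}
from the machinery already developed. First I would dispose of the easy case: if $\W$ is a hyperplane, then Proposition \ref{prop:coh-shult} gives the recovery directly. So I may assume $\W$ is a proper subspace that is strictly contained in a hyperplane of $\fixproj$, which in particular lets me invoke Lemma \ref{lem:deeppoints}\eqref{lem:sub-spiky} to conclude that $\W$ has no deep points.

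Next I would recover the parallelism and the improper points. The relation $\parallel^\ast$ from \eqref{eq:Vebl-paral} and hence its transitive closure $\parallel$ are expressible purely in terms of $\fixoutf(\fixproj,\W)$. By Proposition \ref{prop:paral-coinc}, $\parallel$ coincides with the geometric parallelism $\paral$ from \eqref{eq:comp-paral}, and by Corollary \ref{cor:affine-lines} the set $\linesaff$ is definable in the complement as the set of self-parallel lines. Since $\W$ is spiky, the assignment $[L]_\parallel\mapsto L^\infty$ is a well-defined bijection from $\linesaff/_\parallel$ onto $\W$. Thus I identify the improper points with parallelism classes, which recovers $\W$ pointwise using only the internal data of the complement.

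Then I would recover the improper lines. The anti-euclidean relation $\sim$ of \eqref{eq:rel-falka} and its derivative $\equiv$ of \eqref{eq:rel-kreski} are expressible in the complement. Using Lemma \ref{lem:ternary-collin}, the ternary collinearity of three pairwise non-parallel improper points $K_1^\infty, K_2^\infty, K_3^\infty$ is characterised internally, either by the existence of a triangle of representatives forming a semiaffine plane (case (i)), or by the $\equiv$-condition on their parallelism classes (case (ii)). The first case yields the set $\lines''$ of horizons of semiaffine planes, while the second, combined with Lemma \ref{lem:equiv}, yields $\lines'$, the set of deep lines organised through the $\bigl[\cdot,\cdot\bigr]_\equiv$ construction. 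Together $\lines'\cup\lines''$ is in bijective correspondence with the lines of $\fixproj$ lying in $\W$.

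Finally I would verify that the incidence $\inc$ defined in the statement matches the incidence of $\fixproj$ under these identifications, which is routine: a proper point lies on a proper line in the complement iff it does in $\fixproj$; an improper point $[K]_\parallel$ lies on a proper line $L$ iff $K\parallel L$, i.e.\ iff $K^\infty\in\overline{L}$; for a line in $\lines''$ coming from a semiaffine plane $\Pi$, the improper points incident to $\Pi^\infty$ are exactly those classes whose representatives lie in $\Pi$; and for a line in $\lines'$, the construction $\bigl[[K]_\parallel,[M]_\parallel\bigr]_\equiv$ collects precisely the classes corresponding to points on the underlying deep line of $\W$. The main obstacle, and the one that forced the detour through deep lines, is precisely this last clause: since $\equiv$ is not transitive in the semiaffine setting (unlike in \cite{cohenshult}), one cannot simply define deep lines as $\equiv$-equivalence classes, and it is Lemma \ref{lem:strange-lines} together with the embeddability hypothesis of $\fixproj$ that guarantees the $\bigl[\cdot,\cdot\bigr]_\equiv$ grouping actually produces the full deep line and not just a partial piece of it. Once this is in hand, all structure of $\fixproj$ has been reconstructed from $\fixoutf(\fixproj,\W)$, which proves the theorem.
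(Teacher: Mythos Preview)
Your proposal is correct and follows essentially the same route as the paper: dispose of the hyperplane case via Proposition~\ref{prop:coh-shult}, recover $\paral$ internally through Proposition~\ref{prop:paral-coinc} and Corollary~\ref{cor:affine-lines}, use spikiness (Lemma~\ref{lem:deeppoints}\eqref{lem:sub-spiky}) to identify $\W$ with $\linesaff/_\parallel$, and then rebuild the improper lines as $\lines'\cup\lines''$ via Lemmas~\ref{lem:equiv}, \ref{lem:strange-lines}, and~\ref{lem:ternary-collin}. One small misattribution: the embeddability hypothesis is not used in Lemma~\ref{lem:strange-lines} itself (that lemma relies only on the Cohen--Shult results about $\rad H$), but rather enters earlier through Lemma~\ref{lem:extend-no-lines} and hence Proposition~\ref{prop:paral-coinc}; this does not affect the correctness of your argument.
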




\begin{flushleft}
  \noindent\small
  K. Petelczyc, M. \.Zynel\\
  Institute of Mathematics, University of Bia{\l}ystok,\\
  K. Cio{\l}kowskiego 1M, 15-245 Bia{\l}ystok, Poland\\
  \verb+kryzpet@math.uwb.edu.pl+,
  \verb+mariusz@math.uwb.edu.pl+
\end{flushleft}

\end{document}